\newtheorem{theorem}{Theorem}
\newtheorem{lemma}{Lemma}
\newtheorem{corollary}{Corollary}
\begin{document}
\title{\bf Geometric realization of toroidal quadrangulations without hidden symmetries}
\author{\Large Serge Lawrencenko \medskip  \\
\small Faculty of Control and Design \\
\small Russian State University of Tourism and Service\\
\small \tt lawrencenko@hotmail.com}
\date {}
\maketitle

\begin{abstract}
It is shown that each quadrangulation of the 2-torus by the Cartesian product of two cycles 
can be geometrically realized in (Euclidean) 4-space without hidden symmetries---that is, so that each combinatorial cellular automorphism of the quadrangulation extends to a geometric symmetry of its Euclidean realization. Such realizations turn out to be new regular toroidal geometric 2-polyhedra which are inscribed in the Clifford 2-torus in 4-space, just as the five regular spherical 2-polyhedra are inscribed in the 2-sphere in 3-space. The following are two open problems: Realize geometrically (1) the regular triangulations and (2) the regular hexagonizations of the 2-torus without hidden symmetries in 4-space.
\end{abstract}

{\bf Keywords:} quadrangulation, torus, Cartesian product of graphs, geometric realization, symmetry group, regular polyhedron.

{\bf MSC Classification:} 51M20 (Primary); 52B15, 51F15, 20F65, 05C25, 57M20, 57M15 (Secondary). 

\section{Introduction}
The concept of hidden symmetry was introduced by Hermann Weyl \cite{w}. The basis of this concept is the understanding that if $P$ is a polyhedron in Euclidean $d\mbox{-}$space $\mathbb{R}^d$ with the group of obvious (that is, Euclidean) symmetries ${\rm{Sym}}(P)$, $P$ may have hidden symmetries which are elements of a larger group---the combinatorial cellular automorphism group ${\rm{Aut}}(P)$. Revealing hidden symmetries in polyhedra is an important problem along with visual geometric realization of those symmetries.

A fundamental theorem by Peter Mani \cite{m} states that any polygonization of the 2-sphere with a 3-connected graph is realizable in $\mathbb{R}^3$ without hidden symmetries---that is, each combinatorial cellular automorphism of the polygonization extends to a geometric symmetry of its Euclidean realization. In this paper this result is generalized for quadrangulations of the 2-torus. 

The ({\it Cartesian}) {\it product} of two graphs (that is, simplicial 1-complexes) $G_1$ and $G_2$ with disjoint vertex sets $V(G_1)$ and $V(G_2)$ is denoted by $G_1 \times G_2$ and is defined to be the graph whose vertex set is $V(G_1 \times G_2)=V(G_1) \times V(G_2)$ and in which two vertices $(u_1, u_2)$ and $(v_1, v_2)$ are connected by an edge if either \{$u_1 = v_1$ and the vertices $u_2,\, v_2$ are connected by an edge in $G_2$\} or \{$u_2 = v_2$ and the vertices $u_1,\, v_1$ are connected by an edge in $G_1$\}. 

Let $C_n$ denote the graph that is a simple cycle of length $n$ ($n \ge 3$). The Cartesian product $C_n \times C_k$ naturally embeds in the 2-torus $\mathbb T^2$ as
$n$ parallels and $k$ meridians (or vice versa) which collectively produce a quadrangulation $C_n \times C_k \hookrightarrow \mathbb T^2$ denoted by $Q_{n,k}$. This quadrangulation has $nk$ vertices,   $2nk$ edges, and $nk$ quadrilateral faces. 

An analogue of Mani's Theorem for toroidal quadrangulations $Q_{n,k}$ is established in Section 3 as part of a more general result---Theorem 1. Geometric realization without hidden symmetries of polygonizations of 
2-manifolds with transitive automorphism groups in $\mathbb{R}^d$ produces regular 2-polyhedra in $\mathbb{R}^d$. An infinite series of such 2-polyhedra is constructed in Section 4, in addition to the noble toroidal hexadecahedron constructed by the author in \cite{l}. (A ``noble polyhedron'', a term introduced by Branko Gr\"unbaum \cite{g}, names a polyhedron whose full symmetry group is vertex- and face-transitive but not necessarily edge-transitive.) More precisely, it is shown in Section 4 that the quadrangulations $Q_{n,k}$ are realizable as noble toroidal 2-polyhedra and, in the specific case $n=k$, even regular toroidal 2-polyhedra which are inscribed in the Clifford 2-torus in $\mathbb{R}^4$, just as the five regular spherical 2-polyhedra are inscribed in the 2-sphere in $\mathbb{R}^3$. It is an open problem to realize geometrically the regular triangulations and hexagonizations without hidden symmetries in 4-space (Section 5).

\section{Concepts, implications, and an example}
Let $G$ be a finite simplicial 1-complex, or in other words, a simple undirected {\it graph}, and let $\mathbb M^2$ be a 2-manifold. The {\it faces} of a topological embedding
$h: G \hookrightarrow \mathbb {M}^2$ are the components of $\mathbb M^2 - h(G)$. Such an embedding is called a {\it polygonization} of $\mathbb {M}^2$ with the graph $G$ provided that the closure of each face is homeomorphic to a closed 2-disc. A polygonization in which each face is bounded by a cycle of $G$ with length 4 is called a ({\it topological}) {\it quadrangulation.} An important family of quadrangulations $Q_{n,k}$ was defined in the Introduction.

On the combinatorial side, a quadrangulation corresponds to an abstract 2-complex (in which each 2-cell corresponds to a quadrilateral face) provided that the intersection of the closures of any two faces is either empty, a vertex or an edge (including its two vertices) of $G$. 

Let $K^p$ and $L^q$ be finite abstract CW-complexes of dimensions $p$ and $q$ ($p \le q$), with vertex sets $V(K^p)$ and $V(L^q)$, respectively. A ({\it combinatorial cellular}) {\it homomorphism} 
$K^p \rightarrow L^q$ is defined to be a {\it cellular mapping} $\mu: V(K^p) \rightarrow V(L^q)$---
that is, if $v_0, v_1, \ldots, v_r$ are the vertices of a cell of $K^p$, then 
$\mu(v_0), \mu(v_1), \ldots,\mu(v_r)$ are the vertices of a cell of $L^q$. An injective homomorphism is called a {\it monomorphism,} and a surjective monomorphism is called an {\it isomorphism.} Especially, an isomorphism $K^p \rightarrow K^p$ is called an {\it automorphism} of $K^p$. The automorphism group of $K^p$ is denoted ${\rm {Aut}} (K^p)$. The symbol $\equiv$ designates identity of groups. 

\begin{lemma}
\begin{equation}
|{\rm{Aut}} (C_n \times C_k)| =
  \begin{cases}
   4nk & {\rm{if}} \ n \ne k \\
   8n^2 & {\rm{if}} \ n=k \ne 4 \\
   384 & {\rm{if}} \ n=k=4
  \end{cases}
\end{equation}
\begin{equation}
|{\rm{Aut}} (Q_{n,k})| =
  \begin{cases}
   4nk & {\rm{if}} \ n \ne k \\
   8n^2 & {\rm{if}} \ n=k
  \end{cases}
\end{equation}
\end{lemma}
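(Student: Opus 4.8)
The plan is to treat the two formulas by related but separate arguments: formula (1) concerns the graph $C_n \times C_k$ on its own, whereas formula (2) also remembers the $2$-cells of the quadrangulation. Throughout I write a vertex as a pair $(x,y)$ with $x\in\mathbb Z_n$ and $y\in\mathbb Z_k$, and write $D_m$ for the dihedral group of order $2m$.

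For formula (1) I would invoke the classical structure theorem for automorphism groups of Cartesian products (Sabidussi--Vizing): if a connected graph has prime factorization $G_1\times\cdots\times G_r$ with respect to the Cartesian product, then ${\rm Aut}(G_1\times\cdots\times G_r)$ is the semidirect product of $\prod_i{\rm Aut}(G_i)$ with the group of those permutations of $\{1,\dots,r\}$ that preserve the isomorphism types of the factors. The one point that must not be overlooked is that $C_m$ is prime for every $m\ge 3$ \emph{except} $m=4$, where $C_4\cong K_2\times K_2$ (indeed any nontrivial Cartesian product has minimum degree $\ge 2$ and is $2$-regular only when both factors are $K_2$). Hence: if $n\ne k$ and neither is $4$, the prime factors $C_n,C_k$ are non-isomorphic and ${\rm Aut}(C_n\times C_k)\cong D_n\times D_k$ has order $4nk$; if $n\ne k$ but (say) $k=4$, the prime factors are one $C_n$ and two copies of $K_2$, and $D_n\times(\mathbb Z_2\wr\mathbb Z_2)$ again has order $2n\cdot 8=4nk$; if $n=k\ne 4$ the two prime factors are isomorphic copies of $C_n$, so ${\rm Aut}\cong(D_n\times D_n)\rtimes\mathbb Z_2$ has order $8n^2$; and if $n=k=4$ the graph is that of the $4$-dimensional cube $K_2\times K_2\times K_2\times K_2$, whose automorphism group $\mathbb Z_2\wr S_4$ has order $2^4\cdot 4!=384$. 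One could also bypass the general theorem and obtain the same four cases from the canonical-cycle analysis below, at the price of accounting separately for the extra ``diagonal'' automorphisms of the $4$-cube.

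For formula (2), observe that a combinatorial cellular automorphism of $Q_{n,k}$ is exactly a graph automorphism of $C_n\times C_k$ that carries the set of quadrilateral faces to itself; equivalently --- since the cyclic arrangement of the four edges and four faces at a vertex is recoverable from face--edge--vertex incidence --- it is a graph automorphism that preserves the notion of a \emph{straight} cycle, one that at every vertex departs by the edge opposite, in that local cyclic order, to the edge by which it entered. The first step is to check that the straight cycles of $Q_{n,k}$ are precisely the $n$ meridians and $k$ parallels: straightness keeps a walk in a fixed coordinate direction, so it cannot leave the meridian or parallel on which it starts. Thus ${\rm Aut}(Q_{n,k})$ acts on this canonical set of $n+k$ cycles, and the action is faithful because each vertex is the unique common vertex of the meridian and the parallel through it. The second step is to read the group off the incidence pattern: meridians are pairwise disjoint, parallels are pairwise disjoint, and each meridian meets each parallel in one vertex, so the intersection graph on the $n+k$ cycles is the complete bipartite graph $K_{n,k}$, whose bipartition (meridians $\mid$ parallels) is therefore canonical; an automorphism preserves this bipartition, except that when $n=k$ it may also interchange the two classes. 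On the set of meridians an automorphism preserves ``being joined by an edge of $Q_{n,k}$'', which makes the $n$ meridians into an $n$-cycle (a triangle when $n=3$), so the induced action lies in ${\rm Aut}(C_n)=D_n$ (resp.\ ${\rm Aut}(K_3)=D_3$), and likewise $D_k$ on the parallels. Hence ${\rm Aut}(Q_{n,k})$ embeds into $D_n\times D_k$ when $n\ne k$, and into $(D_n\times D_n)\rtimes\mathbb Z_2$ when $n=k$. Finally these inclusions are equalities, since the maps $(x,y)\mapsto(\pm x+s,\,\pm y+t)$ and, for $n=k$, the flip $(x,y)\mapsto(y,x)$ are cellular automorphisms realizing every element of the target group, and their induced actions on meridians and on parallels are mutually independent. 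Counting yields $|{\rm Aut}(Q_{n,k})|=4nk$ for $n\ne k$ and $8n^2$ for $n=k$, with no exceptional value; the gap relative to formula (1) at $n=k=4$ is exactly the diagonal symmetries of the $4$-cube, which do not respect the toroidal quadrangulation.

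The step I expect to be the real obstacle is the second one for formula (2): pinning down the straight cycles and then extracting the group from their incidence pattern, while staying alert to the small values $n,k\in\{3,4\}$ (for instance, when $n=3$ a meridian is a triangle and the ``joined-by-an-edge'' graph on the three meridians is $K_3$ rather than a $3$-cycle --- harmless, since $D_3={\rm Aut}(K_3)={\rm Aut}(C_3)$). Formula (1) is essentially bookkeeping once the non-primality $C_4\cong K_2\times K_2$ is kept firmly in mind.
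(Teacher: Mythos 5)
Your proof is correct, and the two formulas fare differently when compared with the paper. For Eq.~(1) you take essentially the paper's route: both arguments rest on the classical factorization theorems for the Cartesian product (the paper cites Harary and Harary--Palmer where you cite Sabidussi--Vizing), yielding $D_n\times D_k$ for relatively prime factors, $D_n\,\mathrm{wr}\,S_2$ for $n=k\ne 4$, and the hyperoctahedral group of order $384$ for the $4$-cube; you are in fact a little more careful than the paper in checking that the subcase $n\ne k$ with one factor equal to $4$ (where $C_4=K_2\times K_2$ is not prime, so the two factors are merely relatively prime rather than both prime) still gives order $4nk$. For Eq.~(2) your route is genuinely different. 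The paper applies orbit--stabilizer: $\mathrm{Aut}(Q_{n,k})$ is vertex-transitive, and the vertex stabilizer is \emph{asserted} to be $D_4$ when $n=k$ and the order-$4$ group generated by the two coordinate involutions when $n\ne k$. You instead let $\mathrm{Aut}(Q_{n,k})$ act on the canonical family of $n+k$ straight cycles, show this action is faithful, read off from the $K_{n,k}$ intersection pattern and the cyclic adjacency within each class an embedding into $D_n\times D_k$ (resp.\ $(D_n\times D_n)\rtimes\mathbb{Z}_2$), and then exhibit explicit automorphisms attaining these bounds. What your approach buys is a genuine proof of the upper bound on the stabilizer, which the paper leaves implicit (it never verifies that the stabilizer is no larger than $D_4$ when $n=k$); it also makes transparent why the three ``diagonal'' copies' worth of extra symmetry of the $4$-cube disappears at the level of $Q_{4,4}$. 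The cost is length: the paper's stabilizer computation is shorter, though it quietly relies on the same observation that cycles of different lengths cannot be interchanged.
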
  

\begin{proof}
Firstly we prove Eq.~(1) of the lemma by standard graph-theoretic techniques \cite{h, hp}. Note that $C_n$ can be extended to a nontrivial product of graphs if and only if $n=4$, in which case $C_4 = I \times J$, where $I$ and $J$ denote two disjoint 1-simplices. In this sense, for $n \ne 4$, $C_n$ is a connected prime graph, and therefore 
(see \cite {hp}) 
$$
{\rm{Aut}} (C_n \times C_n) \equiv {\rm{Aut}}(C_n) \, {\rm{wr}} \, S_2 \equiv D_n \, {\rm{wr}} \, S_2   
$$
which is the wreath product (called the composition in \cite{h}) of the dihedral group $D_n$ by the symmetric group $S_2$ and has order $|D_n|^2 |S_2| = 8n^2$.
Furthermore, for $n \ne k$, $C_n$ and $C_k$ are relatively prime graphs with respect to the graph product operation, and therefore (see \cite{h}) 
$$
{\rm{Aut}} (C_n \times C_k) \equiv {\rm{Aut}}(C_n) \times {\rm{Aut}}(C_k) \equiv D_n \times D_k
$$
which is the direct product of two dihedral groups and has order 
$|D_n||D_k| = 4nk$.
Finally, it is well known \cite{h} that the automorphism group of the graph 
$C_4 \times C_4$ (that is, the 1-skeleton of the 4-cube) has order 384.

We now proceed to prove Eq.~(2) of the lemma. Clearly, the group ${\rm{Aut}} (Q_{n,k})$ is always vertex-transitive. If $n=k$, the stabilizer of each vertex is isomorphic to the dihedral group $D_4$, and therefore 
${|{\rm{Aut}} (Q_{n,n})|} = |V(Q_{n,n})| \times|D_4| = 8n^2$. If $n \ne k$, no cycle with length $n$ can map onto a cycle with length $k$, and therefore the stabilizer of each vertex $v$ of $Q_{n,k}$ is the group of order 4 generated by the permutations $(\alpha, {\rm{id}}_2)$ and $({\rm{id}}_1, \beta)$ of the vertex set $V(Q_{n,k}) = V(C_n \times C_k)$, where ${\rm {id}}_1$ and ${\rm {id}}_2$ are identical but $\alpha$ and $\beta$ are non-identical involutive automorphisms of the factors $C_n$ and $C_k$ (respectively) that fix the vertex $v$. Therefore $|{\rm{Aut}}(Q_{n,k})| = |V(Q_{n,k})| \times 4 = 4nk$.
\end{proof}

A {\it flag} of a 2-complex is defined to be a triple of pairwise incident elements in the form of (vertex, edge, face). 

\begin{corollary}
For any $n, \, k \ge 3$, $Q_{n,k}$ is a noble quadrangulation in the sense that 
${\rm{Aut}}(Q_{n,k})$ is vertex-transitive and face-transitive (but not edge-transitive when $n \ne k$).
Furthermore, for any $n$, $Q_{n,n}$ is a regular quadrangulation in the sense that 
${\rm{Aut}}(Q_{n,n})$ is flag-transitive, which provides maximum possible order of the group. 
\end{corollary}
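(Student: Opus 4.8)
The plan is to derive the corollary from Lemma~1 together with two elementary structural facts about $Q_{n,k}$: that it admits a large translation subgroup, and that the automorphism group of any quadrangulation acts freely on flags.

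First I would record the obvious action of $\mathbb{Z}_n \times \mathbb{Z}_k$ on $Q_{n,k}$ by ``translations'': labelling the vertices as pairs $(i,j)$ with $i \in \mathbb{Z}_n$ and $j \in \mathbb{Z}_k$, each map $(i,j) \mapsto (i+a,\,j+b)$ is a cellular automorphism, and these maps already act transitively on the $nk$ vertices and on the $nk$ quadrilateral faces. Hence ${\rm Aut}(Q_{n,k})$ is vertex- and face-transitive (vertex-transitivity was also noted inside the proof of Lemma~1). For the failure of edge-transitivity when $n \ne k$, I would partition the $2nk$ edges into the $nk$ ``parallel'' edges (those contained in some copy of $C_n$) and the $nk$ ``meridian'' edges (those contained in some copy of $C_k$); as observed in the proof of Lemma~1, no automorphism maps a cycle of length $n$ onto a cycle of length $k$, so no automorphism interchanges these two families, and each is therefore a union of edge orbits. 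Both families are nonempty, so there are at least two edge orbits, which establishes the ``noble but not edge-transitive'' claim for $n \ne k$.

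For the regularity statement I would argue by a flag count. A flag $(v,e,f)$ of $Q_{n,n}$ is specified by a face $f$ ($n^2$ choices), an edge $e$ of $f$ ($4$ choices), and an endpoint $v$ of $e$ ($2$ choices), so $Q_{n,n}$ has exactly $8n^2$ flags. On the other hand, the cellular automorphism group of any surface quadrangulation acts \emph{freely} on its flags: an automorphism fixing $(v,e,f)$ fixes $v$, fixes $e$ and hence the other endpoint of $e$, and fixes $f$ and hence the whole boundary quadrilateral of $f$; propagating across shared edges and using connectedness forces it to fix every vertex, hence to be the identity. Therefore every orbit of ${\rm Aut}(Q_{n,n})$ on flags has size $|{\rm Aut}(Q_{n,n})| = 8n^2$ by Lemma~1, which is the total number of flags; so there is a single orbit, i.e.\ ${\rm Aut}(Q_{n,n})$ is flag-transitive. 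This same freeness shows that no quadrangulation with $n^2$ faces can have an automorphism group of order exceeding $8n^2$, so the bound is attained and the regularity is ``maximal''.

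The only places requiring care are the routine checks that the listed translations are genuinely cellular and that the free action on flags is correctly accounted for; the substance is already carried by Lemma~1, whose exact value $8n^2$ is precisely what forces flag-transitivity. If one prefers a local argument in place of the freeness count, one can instead verify directly that the vertex-stabilizer $D_4$ furnished by Lemma~1 permutes the $8$ flags incident to a fixed vertex $v$ in a single orbit: the rotation of order $4$ cyclically links one set of four of them, and any reflection carries that set onto the complementary four. Combined with vertex-transitivity this again gives flag-transitivity and, since $|D_4| = 8$, simple transitivity on the whole flag set.
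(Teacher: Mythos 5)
Your proof is correct. The paper itself offers no argument for this corollary: it is presented as an immediate consequence of Lemma~1, whose proof already records vertex-transitivity and identifies the vertex stabilizers ($D_4$ when $n=k$, a group of order $4$ generated by $(\alpha,{\rm id}_2)$ and $({\rm id}_1,\beta)$ when $n\ne k$). Your second, ``local'' argument---that the $D_4$ stabilizer permutes the eight flags at a vertex transitively, which together with vertex-transitivity yields flag-transitivity---is precisely the reasoning the paper leaves implicit, so in that respect you are on the paper's route. Your primary route, via the free action of the automorphism group on flags plus the count of $8n^2$ flags, is a slightly different packaging with a concrete payoff: freeness gives $|{\rm Aut}|\le 8\cdot(\text{number of faces})$ for any quadrangulation, so it simultaneously justifies the clause ``which provides maximum possible order of the group,'' a point the paper asserts without explanation. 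The one step where your write-up is as terse as the paper's is the claim that no automorphism interchanges the parallel and meridian edge families when $n\ne k$; strictly this needs the (easy) observation that a cellular automorphism sends opposite edges at a vertex to opposite edges, hence sends the distinguished straight-ahead cycles to straight-ahead cycles, after which the length comparison $n\ne k$ applies. With that remark added, the argument is complete.
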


For a finite abstract $p\mbox{-}$complex $K^p$, let $|K^p|: K^p \hookrightarrow \mathbb {R}^d$ denote a (geometric) realization of $K^p$---that is, a $p\mbox{-}$polyhedron whose cell structure is naturally  inherited from $K^p$ and whose ``corner points'' correspond naturally and bijectively to the vertices of $K^p$. Euclidean motions leaving $|K^p|$ invariant form a finite subgroup of the group of Euclidean motions of $\mathbb R^d$. That subgroup is denoted by ${\rm{Sym}}(|K^p|)$ and is called the 
({\it full}) {\it symmetry group of $|K^p|$.} Acting on the vertex set $V(K^p)$, the group ${\rm{Sym}}(|K^p|)$ corresponds to a subgroup of ${\rm{Aut}}(K^p)$ and is often understood combinatorially in this paper---that is, as the group of the corresponding permutations of $V(K^p)$. It will be clear from the context whether we understand 
${\rm{Sym}}(|K^p|)$ combinatorially or geometrically.

A {\it polytope} is defined to be the convex hull of a finite set of points in $\mathbb {R}^d$. A {\it $d\mbox{-}$polytope} is a $d\mbox{-}$dimensional polytope.
Let $\mu: K^p \rightarrow L^q$ be a combinatorial cellular monomorphism and let a realization 
$|L^q|: L^q \hookrightarrow \mathbb{R}^{q+1}$ be given by the boundary complex of some 
$(q+1)\mbox{-}$polytope in $\mathbb{R}^{q+1}$. 
Denote by $\hat{K}^p$ the image $\mu(K^p)$ and denote by $|\hat{K}^p|$ the realization 
$\hat{K}^p \hookrightarrow \mathbb{R}^{q+1}$ naturally induced by $|L^q|$. Therefore we obtain a realization 
$|\hat{K}^p|: K^p \hookrightarrow \mathbb{R}^{q+1}$ as
\begin{equation}
K^p \rightarrow \hat{K}^p \rightarrow |\hat{K}^p| \subseteq |L^q| \subset \mathbb{R}^{q+1}.
\end{equation}
\noindent Realization (3) of $K^p$ by the polyhedron $|\hat{K}^p|$ in $\mathbb{R}^{q+1}$ is said to be {\it without hidden symmetries} provided that each automorphism of $K^p$ is induced by some Euclidean symmetry of $|\hat{K}^p|$.

On the algebraic side, assuming that the origin of $\mathbb {R}^{q+1}$ is fixed by the whole symmetry group of 
$|\hat{K}^p|$, if realization (3) of $K^p$ by $|\hat{K}^p|$ in $\mathbb{R}^{q+1}$ is without hidden symmetries, then the group ${\rm{Sym}}(|\hat{K}^p|) \subset {\rm{O}}(q+1)$ provides a faithful representation of the group ${\rm{Aut}}(K^p)$ of degree $q+1$.

\begin{lemma}
For the absence of hidden symmetries in realization (3) it is sufficient that the following three conditions hold simultaneously:

{\rm {(i)}}   ${\rm {Sym}} (|L^q|) \subseteq {\rm {Sym}} (|\hat{K}^p|),$

{\rm {(ii)}}  $|V(L^q)| = |V(K^p)|,$

{\rm {(iii)}} $|{\rm{Sym}}(|L^q|)| = |{\rm{Aut}}(K^p)|.$
\end{lemma}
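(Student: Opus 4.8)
The plan is to prove the lemma by a cardinality-squeeze argument on the chain of groups linking ${\rm Aut}(K^p)$, ${\rm Aut}(\hat{K}^p)$, ${\rm Sym}(|\hat{K}^p|)$ and ${\rm Sym}(|L^q|)$, all read combinatorially as permutation groups of a single common vertex set. First I would dispose of the bookkeeping. Since $\mu\colon K^p\to L^q$ is a monomorphism it is injective on vertices, and as $\hat{K}^p$ is by definition the image $\mu(K^p)$, the corestriction $\mu\colon K^p\to\hat{K}^p$ is a surjective monomorphism, hence an isomorphism; thus ${\rm Aut}(K^p)\equiv{\rm Aut}(\hat{K}^p)$, and it suffices to show that every automorphism of $\hat{K}^p$ is induced by a Euclidean symmetry of $|\hat{K}^p|$. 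Condition (ii) then makes $\mu$ a bijection $V(K^p)\to V(L^q)$, so the corner points of $|\hat{K}^p|$ are, as a point set in $\mathbb R^{q+1}$, exactly the vertices of the $(q+1)$-polytope whose boundary complex is $|L^q|$; these vertices affinely span $\mathbb R^{q+1}$, so a Euclidean motion fixing $|\hat{K}^p|$, or fixing $|L^q|$, is determined by its induced permutation of that vertex set, and the passage from such motions to vertex permutations is faithful and order-preserving for both complexes.

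The core of the argument is then the chain
\[
{\rm Sym}(|L^q|)\ \subseteq\ {\rm Sym}(|\hat{K}^p|)\ \subseteq\ {\rm Aut}(\hat{K}^p)\ \equiv\ {\rm Aut}(K^p),
\]
where the first inclusion is hypothesis (i), the second is the general fact that a Euclidean symmetry of a polyhedron is a fortiori a combinatorial automorphism of its cell structure, and the last identification was just established. Taking orders and invoking hypothesis (iii), $|{\rm Sym}(|L^q|)|=|{\rm Aut}(K^p)|$, forces every containment in the chain to be an equality of finite groups; in particular ${\rm Sym}(|\hat{K}^p|)\equiv{\rm Aut}(\hat{K}^p)$, which, transported along the isomorphism $\mu$, says precisely that each automorphism of $K^p$ is realized by a Euclidean symmetry of $|\hat{K}^p|$ --- that is, realization (3) is without hidden symmetries.

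I expect the only delicate point to be the interface between the geometric and the combinatorial readings of ${\rm Sym}(|\hat{K}^p|)$ and ${\rm Sym}(|L^q|)$: one must be sure no group order is lost on passing to induced vertex permutations, which is what the full-dimensionality remark above secures, and that hypothesis (i) is an honest inclusion of subgroups of ${\rm O}(q+1)$ --- a Euclidean symmetry of the ambient polytope boundary $|L^q|$ that carries the subcomplex $\hat{K}^p$ onto itself genuinely counts as a symmetry of $|\hat{K}^p|$. Once these are pinned down the rest is routine arithmetic with finite group orders, each of the three hypotheses being used exactly once.
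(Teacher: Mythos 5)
Your proof is correct and is essentially the paper's own argument: the paper also runs a counting argument in which (i) gives a map from ${\rm Sym}(|L^q|)$ into the automorphisms induced on $K^p$, (ii) makes it injective, and (iii) forces surjectivity. Your chain-of-inclusions packaging and the explicit remark that the vertex set affinely spans $\mathbb{R}^{q+1}$ (so that passing from Euclidean motions to vertex permutations loses nothing) just make explicit what the paper leaves implicit.
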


\begin{proof}
By condition (i), each symmetry of $|L^q|$ is a symmetry of $|\hat{K}^p|$ 
and, therefore, induces some automorphism of $K^p$. 
By condition (ii), distinct symmetries of $|L^q|$ induce distinct automorphisms of $K^p$.
By condition (iii), each automorphism of $K^p$ is induced by some symmetry of $|L^q|$ which is also a symmetry of $|\hat{K}^p|$ by (i).
\end{proof}

For example, for $p=q=1$ 
the cycle $K^1 = C_n$ is realized by the boundary complex  
$B(P^2_n)$ of a regular Euclidean $n\mbox{-}$gon $P^2_n$
in $\mathbb{R}^2$. Note that $P^2_n$ is a 2-polytope in $\mathbb{R}^2$, 
and also note that the groups ${\rm{Sym}} (B(P^2_n))$ and ${\rm{Sym}} (P^2_n)$
are identical as permutation groups on the set $V(P^2_n) = V(|\hat{K}^1|).$ 
The complex $B(P^2_n)$ corresponds to $|\hat{K}^1| = |L^1|$ in (3), and therefore condition (ii) of Lemma 2 holds.
Since both groups ${\rm{Sym}} (P^2_n)$ and ${\rm{Aut}}(C_n)$ act on the set $V(C_n)$
as the $n\mbox{-}$gonal dihedral group $D_n$, condition (iii) also holds. Furthermore, since each element of the group
${\rm{Sym}} (P^2_n)$ leaves the 1-skeleton $C_n$ setwise invariant, condition (i) holds too, and by Lemma 2 the graph $C_n$ is realized by $B(P^2_n)$ without hidden symmetries.

\section{Realization of toroidal quadrangulations}

The construction of this section is a generalization of the example with the cycle $C_n$ at the end of the preceding section.

The Cartesian product of a regular $n\mbox{-}$gon $P ^2_n$ and a regular $k\mbox{-}$gon $P ^2_k$ ($n,k \ge 3$) is known \cite{o} as the {\it$n,k\mbox{-}$duoprism}, denoted by $P^4_{n,k}$ in this paper, and is a 4-polytope as the convex hull of the set of $nk$ points  
$M_{ij}\big{(}\cos \frac {2 \pi i} n, \sin \frac {2 \pi i} n, \cos \frac {2 \pi j} k, \sin \frac {2 \pi j} k\big{)}$, where $i=0,1,\ldots, n-1$ and $j=0,1,\ldots, k-1$. Therefore the vertices $M_{ij}$ of $P^4_{n,k}$ lie on the Clifford 2-torus in $\mathbb{R}^4$ and the boundary complex $B(P^4_{n,k})$ is a 3-polyhedron [which plays the role of $|L^3|$ in realization (3)] inscribed in that 2-torus. 
It is not hard to observe that the 1-skeleton of $B(P^4_{n,k})$
is the graph $C_n \times C_k$ [which plays the role of $K^1$ in realization (3)] and that the 2-skeleton of $B(P^4_{n,k})$ contains a realization 
$\hat{Q}_{n,k}$ of $Q_{n,k}$ plus $n$ more $k\mbox{-}$gons and $k$ more 
$n\mbox{-}$gons.

\begin{theorem}
For any $n, \, k \ge 3$, the graph $C_n \times C_k$ is geometrically realized in $\mathbb{R}^4$ without hidden symmetries by the 
1-skeleton of the corresponding duoprism's boundary complex $B(P^4_{n,k})$, and the toroidal quadrangulation $Q_{n,k}$ is geometrically realized without hidden symmetries in the 2-skeleton of $B(P^4_{n,k})$. 
\end{theorem}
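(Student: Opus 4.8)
The plan is to deduce both assertions from Lemma~2, feeding in the explicit coordinates of the duoprism vertices $M_{ij}$ and the group orders already computed in Lemma~1. Throughout I take $q=3$ and $|L^q|=B(P^4_{n,k})$; as recalled just before the theorem, the $1$-skeleton of $B(P^4_{n,k})$ is a copy of $C_n\times C_k$, and its $2$-skeleton splits as a realization $\hat Q_{n,k}$ of $Q_{n,k}$ (the $nk$ ``product'' quadrilaterals, each an edge of the regular $n$-gon times an edge of the regular $k$-gon) together with $n$ copies of the regular $k$-gon and $k$ copies of the regular $n$-gon. In either realization the vertex set is $\{M_{ij}\}$, of cardinality $nk=|V(C_n\times C_k)|=|V(Q_{n,k})|$, so hypothesis~(ii) of Lemma~2 holds automatically in every case, and hypothesis~(i) — that a Euclidean symmetry of $|L^q|$ is a Euclidean symmetry of the induced realization — holds trivially for the $1$-skeleton, since any motion preserving a polytope preserves its boundary complex.

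For the graph $C_n\times C_k$ I would apply Lemma~2 with $K^1=C_n\times C_k$ and $|\hat K^1|$ equal to the $1$-skeleton of $B(P^4_{n,k})$; the only real content is hypothesis~(iii), i.e.\ computing $|{\rm Sym}(B(P^4_{n,k}))|$. I first exhibit isometries of $\mathbb R^4=\mathbb R^2\times\mathbb R^2$ leaving $P^4_{n,k}$ invariant: the ``product'' group acting by a dihedral symmetry of the regular $n$-gon on the first $\mathbb R^2$ and, independently, one of the regular $k$-gon on the second; for $n=k$ also the coordinate swap $(x_1,x_2,x_3,x_4)\mapsto(x_3,x_4,x_1,x_2)$; and for $n=k=4$ the full hyperoctahedral group of the $4$-cube $P^4_{4,4}$. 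Since ${\rm Sym}(B(P^4_{n,k}))$ acts faithfully on the $nk$ vertices it embeds in ${\rm Aut}(C_n\times C_k)$, and the isometries just listed already generate a subgroup of order $4nk$, $8n^2$ or $384$ in the cases $n\ne k$, $n=k\ne4$, $n=k=4$; by Eq.~(1) of Lemma~1 these are exactly $|{\rm Aut}(C_n\times C_k)|$, so the embedding is onto, giving~(iii). (For $n\ne k$ one can also see directly that no symmetry interchanges the two $\mathbb R^2$-factors, since the two parallel classes of edges have different lengths $2\sin(\pi/n)\ne2\sin(\pi/k)$.) Lemma~2 then yields the first assertion.

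For the quadrangulation $Q_{n,k}$ the cleanest route is a squeeze argument that sidesteps Lemma~2. A Euclidean symmetry of the $2$-polyhedron $\hat Q_{n,k}$ acts faithfully on the $nk$ vertices and maps cells to cells, hence induces an automorphism of $Q_{n,k}$; thus ${\rm Sym}(\hat Q_{n,k})$ embeds in ${\rm Aut}(Q_{n,k})$, a group of order $4nk$ (if $n\ne k$) or $8n^2$ (if $n=k$) by Eq.~(2) of Lemma~1. Conversely, the product isometries above and (for $n=k$) the swap respect the product structure of $P^4_{n,k}$ and therefore permute the $nk$ product quadrilaterals among themselves, so they lie in ${\rm Sym}(\hat Q_{n,k})$; this subgroup already has order $4nk$ or $8n^2$. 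Squeezing the two inclusions forces ${\rm Sym}(\hat Q_{n,k})\cong{\rm Aut}(Q_{n,k})$, so every automorphism of $Q_{n,k}$ is induced by a Euclidean symmetry of $\hat Q_{n,k}$, which is the second assertion. (When $(n,k)\ne(4,4)$ one could instead invoke Lemma~2 with $L^3=B(P^4_{n,k})$, checking hypothesis~(i) by the observation that the product quadrilaterals are singled out among all $2$-faces — by number of sides when $n,k\ne4$, by side lengths otherwise; but in the tesseract case $n=k=4$ the ambient symmetry group has order $384>128=|{\rm Aut}(Q_{4,4})|$ and is transitive on all $24$ square faces, so hypothesis~(i) genuinely fails there and the squeeze argument is the one that survives.)

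I expect the main obstacle to be the careful determination of ${\rm Sym}(B(P^4_{n,k}))$ — in particular spotting the tesseract anomaly, in which $P^4_{4,4}$ acquires hyperoctahedral symmetries invisible in the product description — together with the attendant warning that, for the $2$-skeleton when $n=k=4$, one must not use the ambient duoprism group (which carries product quadrilaterals onto the extra squares) but only the order-$128$ group $D_4\,{\rm wr}\,S_2$, matching $|{\rm Aut}(Q_{4,4})|$ exactly. The remaining ingredients — the coordinates of the $M_{ij}$, the identification of the $1$- and $2$-skeleta of $B(P^4_{n,k})$, and the vertex counts — are bookkeeping already carried out in the text preceding the theorem.
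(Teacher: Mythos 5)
Your proposal is correct. For the first assertion it takes essentially the paper's route: the same three-case determination of ${\rm{Sym}}(P^4_{n,k})$ (as $D_n \times D_k$, as $D_n\,{\rm{wr}}\,S_2$, and as the hyperoctahedral group of order $384$), matched against Eq.~(1) of Lemma~1 and fed into Lemma~2; your squeeze (the symmetry group embeds into ${\rm{Aut}}(C_n\times C_k)$ by faithfulness on the vertices, while the exhibited product isometries already attain the required order) just makes explicit the upper bound that the paper asserts through the product-group identities, and your edge-length remark supplies the justification the paper leaves to the references. For the second assertion the routes genuinely diverge: the paper disposes of it in one line, ``because ${\rm{Aut}}(Q_{n,k}) \subseteq {\rm{Aut}}(C_n \times C_k)$'' --- every automorphism of $Q_{n,k}$ is a graph automorphism, hence by the first part is induced by a symmetry $g$ of $B(P^4_{n,k})$, and (implicitly) $g$ stabilizes $|\hat{Q}_{n,k}|$ because the induced vertex permutation carries faces of $Q_{n,k}$ to faces of $Q_{n,k}$ and a $2$-cell of the polytope is determined by its vertex set. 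You instead run a second squeeze directly on ${\rm{Sym}}(|\hat{Q}_{n,k}|)$ against Eq.~(2) of Lemma~1, never passing through the ambient group for this step. Both arguments are valid, but yours has the merit of isolating a subtlety the paper leaves unremarked: for $n=k=4$ the full group ${\rm{Sym}}(B(P^4_{4,4}))$ of order $384$ is transitive on all $24$ squares of the $4$-cube and does \emph{not} stabilize $\hat{Q}_{4,4}$, so condition (i) of Lemma~2 genuinely fails for the $2$-skeleton realization and one must work with the order-$128$ stabilizing subgroup --- exactly what both your squeeze and the paper's one-liner (correctly, if tersely) do; this is consistent with the paper's own later observation that $384/128=3$ copies of $Q_{4,4}$ sit in the $2$-skeleton of the $4$-cube.
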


\begin{proof}
This proof is divided into three cases.    
 
{\it Case 1. $n \ne k$.} We have
$$
{\rm{Sym}} (P^4_{n,k}) \equiv {\rm{Sym}}(P^2_n \times P^2_k) \equiv 
{\rm{Sym}}(P^2_n) \times {\rm{Sym}}(P^2_k) 
\equiv D_n \times D_k,
$$
The group obtained is the direct product of two dihedral groups and has order $4nk$. Condition (iii) of Lemma 2 holds by Lemma 1(1). Condition (ii) holds trivially. Furthermore, since each element of the group ${\rm{Sym}} (P^4_{n,k})$ leaves the 1-skeleton $C_n \times C_k$ setwise invariant, condition (i) holds as well. Therefore, by Lemma 2, the graph $C_n \times C_k$ is realized by the 1-skeleton of $B(P^4_{n,k})$ without hidden symmetries. Then $Q_{n,k}$ is realized without hidden symmetries in the 2-skeleton of $B(P^4_{n,k})$ because ${\rm{Aut}}(Q_{n,k}) \subseteq {\rm{Aut}}(C_n \times C_k)$.

{\it Case 2. $n=k \ne 4$.} Similarly, here we have
$$
{\rm{Sym}} (P^4_{n,n}) \equiv {\rm{Sym}}(P^2_n \times P^2_n) \equiv 
{\rm{Sym}}(P^2_n) \,{\rm{wr}}\, S_2
\equiv D_n \, {\rm{wr}}\, S_2
$$
which is the wreath product of the dihedral group $D_n$ by the symmetric group $S_2$ and has order 
$|D_n|^2 |S_2| = 8n^2$. By Lemma 1, 
$|{\rm{Sym}}(P^4_{n,n})| = |{\rm{Aut}}(C_n \times C_n)|$, and the proof is completed as in Case  1. 

{\it Case 3. $n=k=4$.} In this case $P^4_{4,4}$ is the 4-cube and ${\rm{Sum}}(P^4_{4,4})$
is the hyperoctahedral group of order 384. Again by Lemma 1,
$|{\rm{Sym}}(P^4_{4,4})| = |{\rm{Aut}}(C_4 \times C_4)|$, and the proof is completed as above.
\end{proof}

\begin{corollary}
${\rm{Aut}}(C_n \times C_k) \equiv {\rm{Sym}}(P^4_{n,k})$ ($n,k \ge 3$), where 
${\rm{Sym}}(P^4_{n,k})$ is regarded as a permutation group on the set $V(C_n \times C_k)$.
\end{corollary}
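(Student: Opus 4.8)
The plan is to package the content already extracted in the proof of Theorem~1 rather than to prove anything substantially new: the asserted identity of groups is exactly the conjunction of the three conditions of Lemma~2, read in the strong direction. Write $L^3 = B(P^4_{n,k})$ and $K^1 = C_n \times C_k$, and let $\Phi$ denote the map sending a Euclidean symmetry $g \in {\rm{Sym}}(P^4_{n,k})$ to the permutation it induces on the common vertex set $V(K^1) = V(P^4_{n,k})$. First I would observe that $\Phi$ is well defined as a map into ${\rm{Aut}}(C_n \times C_k)$: the vertex set of the $4$-polytope $P^4_{n,k}$ is the vertex set of its boundary complex $B(P^4_{n,k})$, and each $g$ carries the $1$-skeleton of $B(P^4_{n,k})$---namely $C_n \times C_k$---setwise onto itself; this is precisely condition~(i) of Lemma~2, verified in all three cases of the proof of Theorem~1. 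Moreover $\Phi$ is a group homomorphism, since composition of Euclidean motions restricts to composition of the induced vertex permutations.

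Next I would argue that $\Phi$ is injective. Here the point is that the $nk$ vertices $M_{ij}$ affinely span $\mathbb{R}^4$, so a Euclidean motion of $\mathbb{R}^4$ is determined by its action on them; equivalently this is condition~(ii) of Lemma~2, $|V(L^3)| = |V(K^1)|$, which guarantees that distinct symmetries of $B(P^4_{n,k})$ induce distinct automorphisms of $C_n \times C_k$. Consequently $\Phi$ maps ${\rm{Sym}}(P^4_{n,k})$ isomorphically onto a subgroup $\Phi\big({\rm{Sym}}(P^4_{n,k})\big) \subseteq {\rm{Aut}}(C_n \times C_k)$ of the symmetric group on $V(C_n \times C_k)$.

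Finally I would invoke the order count. Condition~(iii) of Lemma~2, established in the proof of Theorem~1 by comparison with Lemma~1(1) in each of the cases $n \ne k$, $n = k \ne 4$, and $n = k = 4$, gives $|{\rm{Sym}}(P^4_{n,k})| = |{\rm{Aut}}(C_n \times C_k)|$; together with the injectivity of $\Phi$ this forces $\Phi$ to be surjective. Hence $\Phi$ is a bijection, and since $\Phi$ is literally the action on the common vertex set, the image subgroup coincides with ${\rm{Aut}}(C_n \times C_k)$ as a subgroup of ${\rm{Sym}}(V(C_n \times C_k))$. This is the claimed identity ${\rm{Aut}}(C_n \times C_k) \equiv {\rm{Sym}}(P^4_{n,k})$ of permutation groups.

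I do not expect a genuine obstacle, as the whole argument is a re-reading of Theorem~1 and Lemma~2. If anything requires care, it is the injectivity step: one must make explicit that the geometric-to-combinatorial restriction is faithful---that is, that the vertices of the duoprism affinely span $\mathbb{R}^4$, which is the ``faithful representation of degree $q+1$'' remark preceding Lemma~2---together with the bookkeeping in the case $n=k$, where ${\rm{Sym}}(P^4_{n,k})$ is the wreath product $D_n\,{\rm{wr}}\,S_2$ (or the hyperoctahedral group of order $384$ when $n=k=4$) and one relies on the matching line of Lemma~1(1).
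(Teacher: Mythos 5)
Your proposal is correct and follows exactly the route the paper intends: the corollary is read off from the verification of conditions (i)--(iii) of Lemma~2 in the three cases of the proof of Theorem~1, with the order count from Lemma~1(1) forcing the induced inclusion ${\rm{Sym}}(P^4_{n,k}) \subseteq {\rm{Aut}}(C_n \times C_k)$ of permutation groups to be an equality. Your only addition---making explicit that injectivity of the restriction map rests on the vertices of the duoprism affinely spanning $\mathbb{R}^4$---is a welcome clarification of a step the paper leaves implicit in Lemma~2, not a departure from its argument.
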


Notably, Case 3 is the only case where $|{\rm{Aut}}(C_n \times C_k)| \ne |{\rm{Aut}} (Q_{n,k})|$ (compare to Lemma 1), which means that there are at least two copies of $Q_{4,4}$ in the 2-skeleton of the 4-cube (all realized without hidden symmetries 
by Theorem 1). The exact number of such copies is equal to 3, which can be found by \cite[formula (4)]{ckl} along with Lemma 1 as the ratio $|{\rm{Aut}}(C_4 \times C_4)| / |{\rm{Aut}} (Q_{4,4})|= 384/128 =3$.

\section{Regular 2-polyhedra}

Any geometric realization of a polygonization of a closed 2-manifild in $\mathbb{R}^d$
without hidden symmetries is called a {\it regular 2-polyhedron} provided that the automorphism group of that polygonization is flag-transitive, and, following Branko Gr\"unbaum \cite{g}, is called a {\it noble 2-polyhedron} provided that that group is vertex-transitive and face-transitive but not necessarily edge-transitive. A noble 2-polyhedron is both isogonal and isohedral---that is, all polyhedral angles at the vertices are congruent and all the faces are congruent. In addition to the above listed congruencies, a regular 2-polyhedron has all dihedral angles congruent. The following is a corollary of the combination of Corollary 1 and Theorem 1.

\begin{corollary}
For $n, \, k \ge 3$ the quadrangulation $Q_{n,k}$ is realizable as a noble toroidal 2-polyhedron in $\mathbb{R}^4$, and $Q_{n,n}$ is realizable as a regular toroidal 2-polyhedron in $\mathbb{R}^4$.  
\end{corollary}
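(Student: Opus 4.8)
The plan is to derive Corollary 3 directly from the two ingredients the paper has already assembled. First I would recall that a geometric realization of a polygonization of a closed 2-manifold without hidden symmetries is, by the definition given at the start of Section 4, automatically a noble 2-polyhedron as soon as the underlying automorphism group is vertex- and face-transitive, and a regular 2-polyhedron as soon as that group is flag-transitive. So the entire content reduces to verifying two things: that $Q_{n,k}$ (resp.\ $Q_{n,n}$) does admit a geometric realization in $\mathbb{R}^4$ without hidden symmetries, and that ${\rm Aut}(Q_{n,k})$ has the requisite transitivity properties.

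The first point is exactly Theorem 1: the toroidal quadrangulation $Q_{n,k}$ is realized without hidden symmetries inside the 2-skeleton of the duoprism boundary complex $B(P^4_{n,k}) \subset \mathbb{R}^4$. Since $Q_{n,k}$ is a polygonization (indeed a quadrangulation) of the closed 2-manifold $\mathbb{T}^2$, this realization falls under the scope of the Section 4 definitions, so it only remains to check the transitivity hypotheses. That is the second point, and it is precisely Corollary 1: ${\rm Aut}(Q_{n,k})$ is vertex-transitive and face-transitive for all $n,k \ge 3$, and ${\rm Aut}(Q_{n,n})$ is flag-transitive. Combining Theorem 1 with the vertex- and face-transitivity half of Corollary 1 yields that the realization $\hat{Q}_{n,k}$ is a noble toroidal 2-polyhedron; combining Theorem 1 with the flag-transitivity half of Corollary 1 in the case $n=k$ yields that $\hat{Q}_{n,n}$ is a regular toroidal 2-polyhedron. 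I would state the proof in essentially these two sentences.

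There is really no hard step here; the corollary is a formal consequence of the preceding two results once the definitions are unwound. If I had to name a point requiring a word of care, it is the observation that ``noble'' and ``regular'' in the sense of Section 4 are defined so that a no-hidden-symmetries realization inherits the combinatorial transitivity of the polygonization as genuine geometric transitivity (isogonality, isohedrality, and in the regular case equality of dihedral angles): this is where the faithful orthogonal representation of ${\rm Aut}(K^p)$ by ${\rm Sym}(|\hat K^p|)$ noted after Lemma 1 is used implicitly, and it is worth a clause to point out that congruence of the flags (resp.\ vertices and faces) as geometric objects follows because the symmetry group acts transitively on them by Euclidean motions. Beyond that, the proof is immediate, so I would keep it to a few lines and direct the reader to Corollary 1 and Theorem 1.

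\begin{proof}
By Theorem 1, for all $n,k \ge 3$ the toroidal quadrangulation $Q_{n,k}$ is geometrically realized without hidden symmetries in the 2-skeleton of $B(P^4_{n,k}) \subset \mathbb{R}^4$; denote this realization by $\hat{Q}_{n,k}$. Since $Q_{n,k}$ is a quadrangulation of the closed 2-manifold $\mathbb{T}^2$, the realization $\hat{Q}_{n,k}$ is eligible to be called a noble or regular 2-polyhedron in the sense of the definitions opening this section. By Corollary 1, ${\rm Aut}(Q_{n,k})$ is vertex-transitive and face-transitive for all $n,k \ge 3$; as the realization is without hidden symmetries, every such automorphism is induced by a Euclidean symmetry of $\hat{Q}_{n,k}$, so ${\rm Sym}(\hat{Q}_{n,k})$ acts transitively on the vertices and on the faces by Euclidean motions. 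Hence all polyhedral angles at the vertices are congruent and all faces are congruent, and $\hat{Q}_{n,k}$ is a noble toroidal 2-polyhedron. When $n=k$, Corollary 1 further gives that ${\rm Aut}(Q_{n,n})$ is flag-transitive, so ${\rm Sym}(\hat{Q}_{n,n})$ acts transitively on the flags of $\hat{Q}_{n,n}$ by Euclidean motions; consequently all dihedral angles are congruent as well, and $\hat{Q}_{n,n}$ is a regular toroidal 2-polyhedron.
\end{proof}
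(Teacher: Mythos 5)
Your proof is correct and follows exactly the route the paper intends: the paper states this result as an immediate consequence of combining Corollary 1 (vertex-, face-, and, for $n=k$, flag-transitivity of ${\rm Aut}(Q_{n,k})$) with Theorem 1 (realization without hidden symmetries), which is precisely your argument, with the added and welcome care of unwinding the Section 4 definitions.
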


Note that all regular toroidal 2-polyhedra $|\hat{Q}_{n,n}|$ found in Section 3 are inscribed in the Clifford 2-torus in $\mathbb{R}^4$, just as the five regular spherical 2-polyhedra are inscribed in the 2-sphere 
in $\mathbb{R}^3$.

\section{Open problems}
A classification of all regular toroidal polygonizations was given by Coxeter \cite[pp. 25--27]{c}.
They split into three series:  an infinite series of self-dual quadrangulations (with the degree of each vertex equal to $\delta =4$), and two infinite (dual) series of triangulations ($\delta =6$) and hexagonizations ($\delta =3$). The quadrangulations are realized by regular toroidal 2-polyhedra in $\mathbb{R}^4$ by Corollary 3. The following are two open problems: Realize geometrically (1) the regular triangulations and (2) the regular hexagonizations without hidden symmetries in $\mathbb{R}^4$.

\section{Concluding remarks}
So, Mani's Theorem doesn't extend to polygonizations of 2-manifolds of higher genera unless we increase the dimension of the ambient Euclidean space.  A rectangle subdivided by $n$ vertical and $k$ horizontal lines into $nk$ congruent subrectangles in $\mathbb{R}^2$ gives a realization of the corresponding quadrangulation of the 2-disc without hidden symmetries. Furthermore, we can isometrically bend the subdivided rectangle along the vertical lines and then indentify the left and right sides in $\mathbb{R}^3$ to obtain a realization of the corresponding quadrangulation of the 2-cylinder without hidden symmetries. Finally, we can isometrically bend the so obtained 2-cylinder along the horizontal circles (which progressed from the original horizontal lines) and then identify the upper and lower circles in $\mathbb{R}^4$ to obtain, by Theorem 1, a realization of the corresponding quadrangulation of the 2-torus without hidden symmetries. The first of the constructed quadrangulations is a prismatic 2-polytope, the second sits in the boundary complex of a prismatic 3-polytope, and the third sits in the 2-skeleton of the boundary complex of a prismatic 4-polytope.
\medskip

\bibliographystyle{model1-num-names}
\bibliography{<your-bib-database>}

\end{document}